\renewcommand{\phi}{\varphi}
\newcommand{\floor}[1]{\left\lfloor #1 \right\rfloor}
\newcommand{\conf}{configuration}
\newcommand{\Ss}{\mathcal{S}}
\newcommand{\Cc}{\mathcal{C}}
\newcommand{\Pp}{\mathcal{P}}
\newcommand{\Uu}{\mathcal{U}}
\newcommand{\Hh}{\mathcal{H}}
\newcommand{\Ii}{\mathcal{I}}
\newtheorem{theorem}{Theorem}
\newtheorem{lemma}[theorem]{Lemma}
\newtheorem{corollary}[theorem]{Corollary}
\title{The $\chi$-binding function of $d$-directional segment graphs}
\author{
Lech Duraj\thanks{Theoretical Computer Science Department, Faculty of Mathematics and Computer Science,
Jagiellonian University, Krak\'{o}w, Poland. Partially supported by National Science Center of Poland grant 2019/34/E/ST6/00443. \protect}
 \and
Ross J. Kang\thanks{Korteweg--de Vries Institute for Mathematics, University of Amsterdam, Netherlands. Partially supported by a Vidi grant (639.032.614) of the Dutch Research Council (NWO) and the Gravitation programme NETWORKS (024.002.003) of the Dutch Ministry of Education, Culture and Science (OCW). \protect\href{mailto:r.kang@uva.nl}{\protect\nolinkurl{r.kang@uva.nl}}}
 \and
Hoang La\thanks{Theoretical Computer Science Department, Jagiellonian University, Krak\'{o}w, Poland. Partially supported by a Polish National Science Center grant BEETHOVEN; UMO-2018/31/G/ST1/03718. \protect\href{mailto:hoang.la.research@gmail.com}{\protect\nolinkurl{hoang.la.research@gmail.com}} }
 \and 
Jonathan Narboni\footnotemark[1]
 \and 
Filip Pokr\'{y}vka\thanks{Faculty of Informatics, Masaryk University, Brno, Czech Republic. \protect\href{mailto:xpokryvk@fi.muni.cz}{\protect\nolinkurl{xpokryvk@fi.muni.cz}} }
 \and 
Cl\'ement Rambaud\thanks{Université Côte d'Azur, CNRS, Inria, I3S, Sophia-Antipolis, France. Supported by the ANR project DIGRAPHS (ANR-19-CE48-0013). \protect\href{mailto:clement.rambaud@inria.fr}{\protect\nolinkurl{clement.rambaud@inria.fr}}}
 \and 
Amadeus Reinald\thanks{LIRMM, CNRS, Universit\'e de Montpellier, France. Supported by the ANR project DIGRAPHS (ANR-19-CE48-0013). \protect\href{mailto:amadeus.reinald@lirmm.fr}{\protect\nolinkurl{amadeus.reinald@lirmm.fr}}}
}
\date{\today}
\begin{document}

\maketitle

\begin{abstract}
    Given a positive integer $d$, the class $d$-DIR is defined as all those intersection graphs formed from a finite collection of line segments in ${\mathbb R}^2$ having at most $d$ slopes. Since each slope induces an interval graph, it easily follows for every $G$ in $d$-DIR with clique number at most $\omega$ that the chromatic number $\chi(G)$ of $G$ is at most $d\omega$. 
    We show for every even value of $\omega$ how to construct a graph in $d$-DIR that meets this bound exactly. 
    This partially confirms a conjecture of Bhattacharya, Dvo\v{r}\'ak and Noorizadeh.
    Furthermore, we show that the $\chi$-binding function of $d$-DIR is $\omega \mapsto d\omega$ for $\omega$ even and $\omega \mapsto d(\omega-1)+1$ for $\omega$ odd.
    This extends an earlier result by Kostochka and Ne\v{s}et\v{r}il, which treated the special case $d=2$.
\end{abstract}


\section{Introduction}

In structural graph theory, a fundamental task is to characterise the complex, global parameter of chromatic number $\chi$ in terms of the simpler, more local parameter of clique number $\omega$. For a given graph class, the question of whether this task is even well-defined belongs to the theory of $\chi$-boundedness, an area systematically initiated by Gy\'arf\'as in the 1980s~\cite{Gya87}.
A graph class $\mathcal{G}$ is called {\em $\chi$-bounded} if there is some function $f_\mathcal{G}$ such that $\chi(G) \le f_\mathcal{G}(\omega(G))$  for any $G\in\mathcal{G}$, and, if that is so, the optimal choice of $f_\mathcal{G}$ is called the {\em $\chi$-binding} function of $\mathcal{G}$.
Note that the triangle-free graphs of arbitrarily large chromatic number give rise to many interesting graph classes that are not $\chi$-bounded.
This area of mathematics is deep and active, but despite many recent advances, there are many important classes for which the question of $\chi$-boundedness is difficult and remains open; for a nice recent account of the state of the art, see~\cite{ScSe20survey}. Suffice it to say that determination of the $\chi$-binding function for a nontrivial (non-perfect) class is generally considered a rarity.

Since even before Gy\'arf\'as's early work (see~\cite{AsGr60}),
much attention has been focused on intersection classes. These are graph classes defined by taking some natural collection of sets, usually geometrically-defined, and forming for every finite subcollection of those sets an auxiliary graph in which each vertex corresponds to a set, and two vertices are connected by an edge if and only if the corresponding sets have a nontrivial intersection. Such a restriction of focus is not too confining, not only because many intersection classes are fundamental to the structural understanding of graphs (consult, e.g.~\cite{McMc99,Spi03} for more context), but also because intersection classes are more than rich enough for its $\chi$-boundedness theory to contain many fascinating challenges.

As one prominent (and pertinent) example, consider the collection of straight line segments in the plane $\mathbb{R}^2$ (or more formally, the collection of those closed intervals drawn between some pairs of points in $\mathbb{R}^2$). Its intersection class is called the {\em segment (intersection) graphs}. If all the segments happen to lie in parallel, then the resulting segment graph is an interval graph, and thus perfect; that is, it has equality between $\chi$ and $\omega$.
One should not expect this to remain true when we allow the segment slopes to vary, but it is reasonable to ask if there might remain a good relationship between $\chi$ and $\omega$, that is, are segment graphs $\chi$-bounded? Indeed, Erd\H{o}s asked this in the 1970s (see~\cite{Gya87,PKKLMTW14} for more details of the problem's provenance).

Despite sustained attention and nice partial results (see e.g.~\cite{KoNe95,McG00,Suk14}), Erd\H{o}s's innocent-looking but very difficult challenge was not resolved until 2014, and in the negative. Work of Pawlik, Kozik, Krawczyk, Laso\'n, Micek, Trotter, and Walczak~\cite{PKKLMTW14} provided a strikingly elegant construction of triangle-free segment graphs of arbitrarily large chromatic number. Interestingly, the graphs produced by the construction are isomorphic to the intersection graphs that were exhibited by Burling~\cite{Bur65thesis} to show that the intersection class for axis-aligned boxes in ${\mathbb R}^3$ is not $\chi$-bounded.

One might consider this a coda, but permit us to prolong the narrative, in particular by parameterising according to the number of segment slopes.  We first note that the construction of Pawlik {\em et al.}~must have arbitrarily many slopes. Suppose to the contrary that the segments admit at most $d$ slopes. By the same observation as above, each slope induces a perfect graph $H$ having $\chi(H)$ equal to $\omega(H)\le 2$. By colouring the slopes disjointly from one another, we can conclude that $\chi$ is at most $2 d$ for the entire graph, a contradiction.

More generally, given a positive integer $d$, we refer to the {\em $d$-directional segment graphs} as those intersection graphs formed from a collection of line segments in $\mathbb{R}^2$ having at most $d$ distinct slopes. Call the class of such graphs {\em $d$-DIR}. (Take note of the subtlety that this is not {\em per se} an intersection class, but rather a union of many of them.)
What we just argued is that $\chi(G) \le d\omega(G)$ for any $G$ in $d$-DIR. Thus, in a simple fashion, we can conclude that the graph class $d$-DIR is $\chi$-bounded.

The question we would like to address here is, what is the $\chi$-binding function of $d$-DIR? This question was raised
first around the turn of the century by Kostochka and Ne\v{s}et\v{r}il~\cite{Kostochka2002} and again more
recently by Bhattacharya, Dvo\v{r}\'ak and Noorizadeh~\cite{BDN23eurocomb}. 
In both works, it was
proved that there are $2$-directional segment graphs of clique number $2t$ and chromatic number $4t$, for all $t\in \mathbb{N}$, which attains the bound of the above simple argument in this special case.  Bhattacharya {\em et al.}~moreover conjectured that that simple bound is optimal in general, that is, that $\omega \mapsto d \omega$ is the $\chi$-binding function of $d$-DIR for all $d>2$.

We succeeded in completely resolving this question as follows.
\begin{theorem}\label{thm:main}
The $\chi$-binding function of $d$-DIR is
\begin{align*}
\omega\mapsto
\begin{cases}
d\omega & \text{if $\omega$ is even, or}\\
d(\omega-1)+1 & \text{if $\omega$ is odd.}
\end{cases}
\end{align*}
\end{theorem}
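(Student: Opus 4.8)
The statement asserts both an upper bound and a matching lower bound, in each case split by the parity of $\omega$, so the plan is to treat these four pieces separately. The upper bound for even $\omega$ is already in hand: the argument recalled above, colouring the $d$ slope classes with disjoint palettes of size $\omega(G_i)\le\omega$, gives $\chi(G)\le d\omega$. Thus the genuinely new work on the upper side is the saving of $d-1$ colours when $\omega$ is odd.

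For the odd upper bound I would aim to show that the disjoint-palette colouring can be economised by letting consecutive slope classes share colours along an independent ``seam''. The plan is to exploit that each slope class $G_i$ is an interval graph: if some $\omega(G_i)\le\omega-1$ then a colour is already saved there, so the hard case is $\omega(G_i)=\omega$ for every $i$, and here I would try to select in each $G_i$ an independent set $I_i$ meeting every maximum clique, colour $I_i$ with one globally reused colour, and colour $G_i-I_i$ with $\omega-1$ private colours. The delicate point is that the reused colour must form a \emph{global} independent set, i.e.\ $\bigcup_i I_i$ must contain no crossing pair; controlling this across all $\binom{d}{2}$ pairs of directions simultaneously is where I expect the parity of $\omega$ to enter, presumably through a parity/transversal argument on the interval orders, and this is the step I would be least certain of a priori.

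The heart of the theorem is the lower bound for even $\omega$, namely the explicit construction of $G\in d\text{-DIR}$ with $\omega(G)=\omega$ and $\chi(G)=d\omega$. I would generalise the $d=2$ construction of Kostochka and Ne\v{s}et\v{r}il, building the arrangement recursively so that there are $d$ distinguished groups of segments, one per slope, arranged so that in any proper colouring the palettes forced on the $d$ groups inside a common ``core'' region are pairwise disjoint, each of size $\omega$. Concretely I would design a gadget that, on the colouring side, propagates a ``colour $c$ is excluded here'' constraint between crossing segments of different slopes, and iterate it (as in the classical triangle-free high-$\chi$ constructions) to force all $d\omega$ colours; on the geometric side I would keep exactly $d$ slopes and certify that every set of pairwise-crossing segments has size at most $\omega$. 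Proving the chromatic lower bound — that no proper colouring with fewer than $d\omega$ colours exists — is the main obstacle, and I expect it to require an inductive ``colour-spreading'' invariant rather than a direct clique argument, since here $\chi$ exceeds $\omega$ by a factor of $d$.

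Finally, for the odd lower bound I would start from the even extremal graph $H$ with $\omega(H)=\omega-1$ and $\chi(H)=d(\omega-1)$ and augment it by a single gadget that raises both parameters by exactly one, realising geometrically the effect of joining a new vertex to a colourful maximum clique of $H$; the point to verify is that this augmentation stays within $d$ slopes and does not inadvertently create an $(\omega+1)$-clique. Note that the naive disjoint union $H\sqcup K_\omega$ only achieves $\chi=d(\omega-1)$, one short of the target, so the augmentation genuinely has to transmit a new colour into the old part, which I expect to be the subtle point of this last step.
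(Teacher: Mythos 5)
Your decomposition into four tasks and your even-$\omega$ upper bound match the paper, but the other three pieces all have genuine gaps, and two of them are fatal as proposed. For the even lower bound (the heart of the theorem), your sketch names the right obstacle --- forcing $d\omega$ colours while keeping exactly $d$ slopes and clique number $\omega$ --- but supplies no mechanism for it. The paper's solution is structurally different from ``iterate a colour-exclusion gadget'': it works entirely with \emph{triangle-free} configurations and \emph{$t$-fold} colourings, proving by induction on $d$ that there is a triangle-free arrangement with at most $d$ slopes on which every $t$-fold colouring puts at least $2td$ colours on the segments crossing some probe, and only at the very end obtains clique number $\omega=2t$ by blowing each segment up into $t$ parallel copies. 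The induction step adds exactly \emph{one} new (very small) slope; to make this possible, the previous configuration is first amplified by a copying operation that joins $4t+1$ disjoint copies through a system of probes and pillars (Lemma~\ref{lemma:pillars}), the new slope's segments are inserted in $4t$ horizontal layers of each probe, and a pigeonhole argument over the layers (finding an index $j_0$ whose colour set is contained both in the union of the colour sets above it and in the union below it) gains roughly half of the still-missing colours in each of $O(\log t)$ rounds. Without the $t$-fold/blowup framework and without this slope-controlled amplification, your plan runs into exactly the problem that forces the construction of Pawlik et al.\ to use unboundedly many slopes, and nothing in your sketch avoids it.

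For the odd lower bound, the specific mechanism you propose fails outright: joining a new vertex to a maximum clique of $H$ cannot increase the chromatic number once $d\ge 2$, because that clique has only $\omega-1$ vertices while an optimal colouring of $H$ uses $d(\omega-1)>\omega-1$ colours, so the new vertex can always reuse a colour absent from its clique neighbourhood. The paper instead goes \emph{up} a level rather than augmenting: for $\omega=2t+1$ it takes the triangle-free configuration $\Cc_{t+1,d}$ (on which every $(t+1)$-fold colouring needs at least $2(t+1)d$ colours), blows up one side of a bipartition of one slope class into $t+1$ copies and every other segment into $t$ copies (keeping the clique number at $2t+1$), and shows that any proper $k$-colouring of the result can be padded with $2d-1$ artificial colours into a $(t+1)$-fold colouring of $\Cc_{t+1,d}$, whence $k\ge 2(t+1)d-(2d-1)=d(\omega-1)+1$. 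Finally, your odd upper bound has the right skeleton (one colour shared across all $d$ slopes) but is missing the property that makes the shared class globally independent: an independent transversal of maximum cliques does not control crossings between slopes. In the paper (following Kostochka and Ne\v{s}et\v{r}il, Lemma~\ref{lemma:colouring_special_vertices_in_interval_graphs}) the shared colour is placed only on vertices whose interval at \emph{every} point lies in more than $(\omega-1)/2$ intervals of its own slope; if two such segments of different slopes crossed, the two stacks through the crossing point would form a clique of size at least $2\cdot\frac{\omega+1}{2}=\omega+1$, a contradiction --- this depth threshold is precisely where the parity of $\omega$ enters, the step you flagged as uncertain.
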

\noindent
One can interpret this statement as a graceful generalisation of perfection in the $d=1$ case.
We note that Kostochka and Ne\v{s}et\v{r}il~\cite{Kostochka2002} had already established the $d=2$ case of Theorem~\ref{thm:main}, unbeknownst to the authors in~\cite{BDN23eurocomb}. This had already refuted the conjecture in~\cite{BDN23eurocomb} in this special case.

The crux in Theorem~\ref{thm:main} is to prove the lower bound in the even $\omega$ case. Here is a rough sketch of that construction, which at a high level combines aspects of both the construction of Pawlik {\em et al.}~and that of Bhattacharya {\em et al.}, and indeed strengthens/refines both constructions.
We start by fixing $\omega=2$ and proceed by induction on $d$, exhibiting triangle-free graphs with $t$-fold chromatic number at least $2td$.
We refine the induction used by Pawlik {\em et al.}~by controlling, at every step, the growth of the number of slopes required for achieving a given $t$-fold chromatic number. We then obtain the result for all even $\omega>2$ by blowing up each segment by a factor $\omega/2$.

The structure of the paper is as follows.
In Section~\ref{sec:prelim}, we lay the ground for our proof by introducing the notions and tools used throughout the paper.
Then we settle the even $\omega$ case in Section~\ref{sec:even}.
The lower bound for the odd $\omega$ case  in Section~\ref{sec:oddlower} is a mild adaptation of the even $\omega$ construction.
We give a short proof of the upper bound for the odd $\omega$ case in Section~\ref{sec:oddupper}.
We end with some open questions for future research in Section~\ref{sec:conclusion}.

\section{Preliminaries}\label{sec:prelim}

\subsection{\texorpdfstring{$t$}{t}-fold colourings}

For all positive integers $a$ and $t$, a \emph{$t$-fold $a$-colouring} of a graph $G$ is a function $\phi\colon V(G) \to \binom{\{1,\dots,a\}}{t}$ such that for every edge $uv \in E(G)$, $\phi(u) \cap \phi(v) = \emptyset$.
For $t=1$, we will often refer to $\phi$ simply as an \emph{$a$-colouring}.
For any $t$-fold colouring $\varphi$ of a graph $G$, if $X$ is a set of vertices of $G$, we denote by $\varphi(X)$ the set of colours used to colour the vertices in $X$, i.e.~$\varphi(X) = \bigcup_{v\in X}\varphi(v)$.
We will also use the notation $\varphi_{\vert X}$ to denote the restriction of the function $\varphi$ to the set $X$.

\subsection{Segments}
A \emph{segment} $S$ with endpoints $A=(x_1,y_1)$ and $B=(x_2,y_2)$ in $\mathbb{R}^2$ is the set $\{A+\lambda (B-A) \mid \lambda \in [0,1]\}$. The \emph{length of a segment} is its Euclidean norm. 
A \emph{rectangle} $R$ is defined as the Cartesian product of two segments $[a,c] \times [b,d]$ with $a,b,c,d \in \mathbb{R}$ and $c>a$ and $d>b$. A rectangle always has non-zero area.
If the two intervals have the same length, we say that $R$ is a \emph{square}. We naturally define the \emph{left}, \emph{right}, \emph{top} and \emph{bottom} \emph{sides} of a rectangle $R$ as the four segments whose union is the boundary of $R$. The left and right side of $R$ are the vertical sides of $R$, and the top and bottom sides of $R$ are the horizontal sides of $R$. The length of a vertical side is the \emph{height} of $R$, the length of a horizontal side is the \emph{width} of $R$. 
The aspect ratio of $R$ is the height divided by the width of $R$. A segment $S$ \emph{crosses a rectangle $R$ vertically or horizontally} if $S$ intersects the two horizontal or vertical sides of $R$, respectively. We say that a rectangle $R_1$ crosses a rectangle $R_2$ vertically (respectively, horizontally) if the left and right sides (respectively, the top and bottom sides) of $R_1$ cross the rectangle $R_2$ vertically (respectively, horizontally). If $R_1$ crosses $R_2$ vertically, then $R_2$ crosses $R_1$ horizontally.
The \emph{slope} of a segment in $\mathbb{R}^2$ with endpoints $(x_1,y_1),(x_2,y_2)$ is $\frac{y_2-y_1}{x_2-x_1}$ if $x_1\neq x_2$,
and $\infty$ otherwise.
The \emph{slope number} of a finite family $\mathcal{S}$ of segments is the cardinality of the set of the slopes of segments in $\mathcal{S}$.

Given a set $\mathcal{S}$ of segments, the \emph{intersection graph} $G(\mathcal{S})$ of $\mathcal{S}$ is the graph with vertex set $\mathcal{S}$ and with edges consisting of all pairs $SS'$ with $S, S' \in  \mathcal{S}$ such that $S \cap S' \neq \emptyset$. 

A \emph{$t$-blowup of a segment} 
is a multiset of $t$ copies of the same segment. A \emph{$t$-blowup of a set of segments} is the multiset of the $t$-blowup of the segments. A \emph{$t$-blowup of a graph} is the graph obtained from $G$ by replacing every vertex by a copy of $K_t$, and every edge by a copy of $K_{t,t}$. Observe that a $t$-fold colouring of $G$ corresponds to a colouring of the $t$-blowup of $G$. 

\subsection{Configurations}

We now introduce the notion of probes, first used in~\cite{PKKLMTW14}.
Let $\mathcal{S}$ be a family of segments in \emph{the unit square} $\Uu = [0,1] \times [0,1]$.
Let $P = [a,c] \times [b,d]$ be a rectangle included in $\Uu$.
We denote by $\mathcal{S}(P)$ the set of segments in $\mathcal{S}$ intersecting $P$.
The \emph{right-extension} of $P$ is the rectangle $[a,1] \times [b,d]$.
The rectangle $P = [a,c] \times [b,d]$ is a \emph{probe} for $\mathcal{S}$ if the following hold:
\begin{itemize}
    \item $c=1$, that is, the right side of $P$ lies on the boundary of $\Uu$;
    \item all segments in $\mathcal{S}(P)$ cross $P$ vertically; and
    \item all segments in $\mathcal{S}(P)$ are pairwise disjoint.
\end{itemize}
Given a probe $P$, a \emph{root} of $P$ is a rectangle $[a,c'] \times [b,d]$ where $c'$ is a real number such that $[a,c'] \times [b,d]$ is disjoint from every segment in $\mathcal{S}$.

A \emph{\conf} is a pair $\mathcal{C} = (\mathcal{S},\mathcal{P})$ with $\mathcal{S}$ a set of segments in $\Uu$ and $\mathcal{P}$ a family of pairwise disjoint probes for $\mathcal{S}$.

For convenience, we define the intersection graph $G(\mathcal{C})$ of a configuration $\mathcal{C}=(\mathcal{S},\mathcal{P})$ as $G(\mathcal{S})$. We say that $\Cc$ is triangle-free if $G(\Ss)$ is a triangle-free graph.

Let $R$ be a square inside $\Uu$ and let $\psi$ be the positive homothety mapping $\Uu$ to $R$. The \emph{$R$-scaled copy} of $\Cc$ is the configuration $\Cc' = (\Ss',\Pp')$ where $\mathcal{S}'$ is the set of images of the segments in $\mathcal{S}$ by $\psi$ and
$\mathcal{P}'$ is the set of the right-extensions
of the images by $\psi$ of the probes in $\Pp$.
Observe that $\Ss'$ has the same set of slopes as $\Ss$.

\subsection{Copying configurations}\label{subsec:copying}

Our refinement on the construction of~\cite{PKKLMTW14} is obtained at the cost of adding significantly many more copies of the construction achieving $\chi$ colours when building the graph requiring $\chi+1$ colours. Here, we describe this operation.
For every configuration $\Cc$ and for every positive integer $k$, we define the new configuration $\Cc^{(k)}$ with segments set $\mathcal{S}^{(k)}$ and probes set $\mathcal{P}^{(k)}$ as ``a way of joining $k$ independent copies of $\Cc$ together''.
With every probe $P \in \mathcal{P}^{(k)}$, we associate a set of $k$ sub-rectangles of $P$ that we call the \emph{pillars} of $P$.
The pillars have the same height as $P$, are pairwise disjoint, and no pillar starts from the left side of $P$ so that there is a root of $P$ not belonging to any pillar.
Informally, each pillar will correspond to a copy of $\Cc$.

Formally, $\Cc^{(k)}=(\Ss^{(k)},\Pp^{(k)})$ is defined as follows:
\begin{itemize}
    \item If $k=1$, then $\mathcal{C}^{(1)} = \mathcal{C}$. For every probe $P$, we define its only pillar $I_0^P$ as the minimal rectangle containing $P$ minus a root. Every segment of $\Ss(P)$ is a segment of $\Ss(I_0^P)$ and it also crosses $I_0^P$ vertically.
    \item If $k\geq 2$, then for every probe $P \in \mathcal{P}^{(k-1)}$ we choose a square $R_P$ in a root of $P$ disjoint from its pillars and insert an $R_P$-scaled copy $\mathcal{C}_P = (\mathcal{S}_P,\mathcal{P}_P)$ of $\mathcal{C}$.
    We define $\mathcal{S}^{(k)} = \mathcal{S}^{(k-1)} \cup  \bigcup_{P \in \mathcal{P}^{(k-1)}} \mathcal{S}_P$ and $\mathcal{P}^{(k)} = \bigcup_{P \in \mathcal{P}^{(k-1)}} \mathcal{P}_P$. 
    In other words, instead of $P$, we take new probes, generated in $R_P$ by the new copy of $\Cc$, extended to the right. These new probes are then wholly contained in $P$.
    
    For every such probe $P'$, we define $k$ different pillars of $P'$. There are $k-1$ pillars $I_0^{P'},I_1^{P'},\dots,I_{k-2}^{P'}$ which are intersections between the pairwise disjoint pillars $I_0^{P},I_1^{P},\dots,I_{k-2}^{P}$ of $P$ and $P'$ and one pillar $I_{k-1}^{P'}$ that is the minimal rectangle containing $R_P \cap P'$ minus a root of $P'$. Since $R_P$ is contained in a root of $P$, $I_{k-1}^{P'}$ is also disjoint from $I_0^{P'},\dots,I_{k-2}^{P'}$. Furthermore, for every $i\in \{0,\dots,k-2\}$, since segments in $\Ss^{(k-1)}(I_i^P)$ cross $I_i^P$ vertically, they also cross $I_i^{P'}$ vertically. Therefore, $\Ss^{(k-1)}(I_i^P)\subseteq \Ss^{(k)}(I_i^{P'})$.  
\end{itemize}
Note that $G(\Cc^{(k)})$ is the union of disjoint copies of $G(\Cc)$,
and in particular $\Cc^{(k)}$ is triangle-free if $\Cc$ is triangle-free. See \Cref{fig:Ck}.

\begin{figure}[H]
    \centering
    \includegraphics{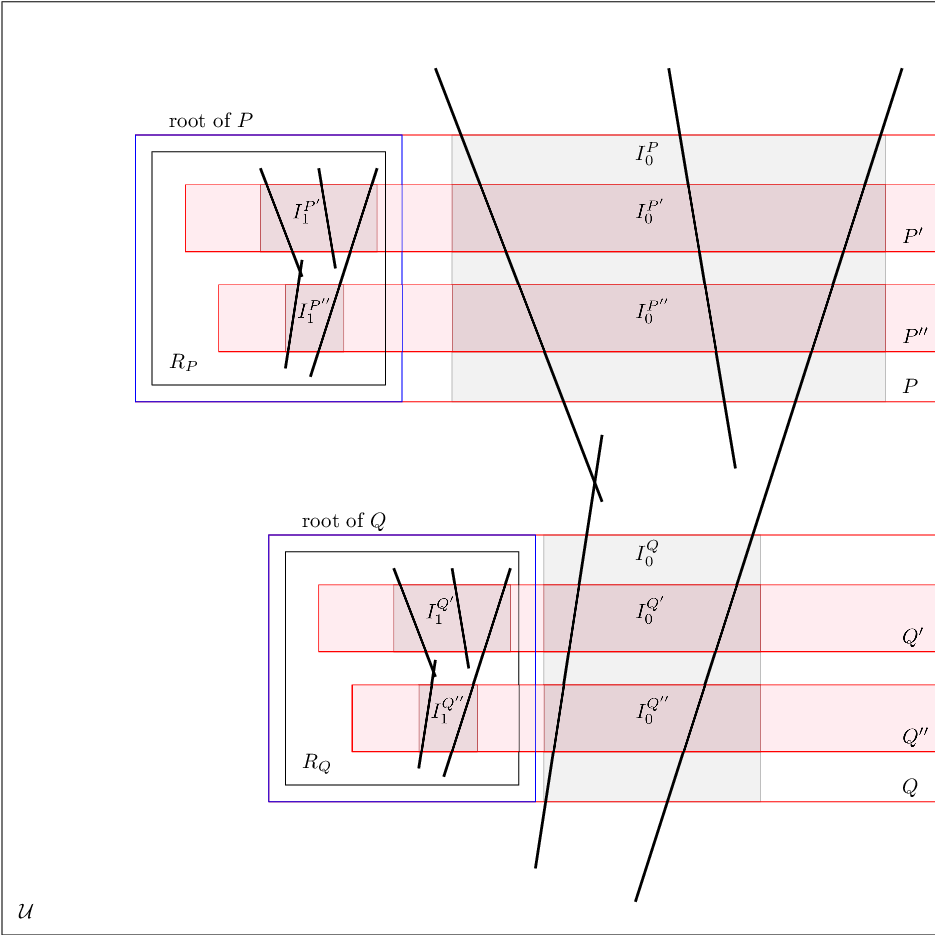}
    \caption{Building $\mathcal{C}^{(2)}$ from $\Cc^{(1)}=\Cc$.}
    \label{fig:Ck}
\end{figure}


\begin{lemma} \label{lemma:pillars}
    Let $\Cc=(\Ss,\Pp)$ be a configuration and let $t$ be a positive integer. If there exists a nonnegative integer $s$ such that, for every $t$-fold colouring $\phi$ of $G(\Cc)$, there exists a probe $P\in \Pp$ such that $|\phi(\Ss(P))|\geq s$, then, for every $t$-fold colouring $\phi$ of $G(\Cc^{(k)})$, there exists a probe $P'$ such that, for every pillar $I$ of $P'$, $|\phi(\Ss(I))|\geq s$.
\end{lemma}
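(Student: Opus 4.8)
The plan is to induct on $k$, following the recursive definition of $\Cc^{(k)}$. For the base case $k=1$ we have $\Cc^{(1)}=\Cc$, and each probe $P$ has a single pillar $I_0^P$ with $\Ss(P)\subseteq\Ss(I_0^P)$; hence the hypothesis applied to the given colouring $\phi$ directly yields a probe $P$ with $|\phi(\Ss(I_0^P))|\geq|\phi(\Ss(P))|\geq s$, which is exactly the desired conclusion. So the real content is the inductive step, where I will carry the property down through one layer of copying.

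For the inductive step, I would fix a $t$-fold colouring $\phi$ of $G(\Cc^{(k)})$ and recall that $\Cc^{(k)}$ is built from $\Cc^{(k-1)}$ by inserting, into a square $R_P$ in a root of each probe $P\in\Pp^{(k-1)}$, an $R_P$-scaled copy $\Cc_P=(\Ss_P,\Pp_P)$ of $\Cc$, with $\Ss^{(k)}=\Ss^{(k-1)}\cup\bigcup_P\Ss_P$ and $\Pp^{(k)}=\bigcup_P\Pp_P$. Since $G(\Cc^{(k-1)})$ and each $G(\Cc_P)$ are induced subgraphs of $G(\Cc^{(k)})$, the restrictions $\phi_{\vert\Ss^{(k-1)}}$ and $\phi_{\vert\Ss_P}$ are genuine $t$-fold colourings. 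First I would apply the induction hypothesis to $\phi_{\vert\Ss^{(k-1)}}$ to obtain a probe $P\in\Pp^{(k-1)}$ such that each of its $k-1$ pillars satisfies $|\phi(\Ss^{(k-1)}(I_i^P))|\geq s$. For this same $P$, I would then use that the property in the hypothesis is invariant under forming an $R_P$-scaled copy (a homothety composed with right-extension of probes preserves both the intersection graph and which segments cross which probe, since vertical crossing is preserved), so that applying it to $\phi_{\vert\Ss_P}$ produces a probe $P'\in\Pp_P\subseteq\Pp^{(k)}$ with $|\phi(\Ss_P(P'))|\geq s$.

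It then remains to verify that this $P'$ witnesses the conclusion, i.e.\ that all $k$ of its pillars see at least $s$ colours. For the pillars $I_0^{P'},\dots,I_{k-2}^{P'}$, the construction already records the inclusion $\Ss^{(k-1)}(I_i^P)\subseteq\Ss^{(k)}(I_i^{P'})$, whence $|\phi(\Ss(I_i^{P'}))|\geq|\phi(\Ss^{(k-1)}(I_i^P))|\geq s$. For the last pillar $I_{k-1}^{P'}$, the minimal rectangle containing $R_P\cap P'$ minus a root, I would observe that every segment of $\Ss_P$ crossing $P'$ lies inside $R_P$ and crosses $I_{k-1}^{P'}$ vertically, giving $\Ss_P(P')\subseteq\Ss(I_{k-1}^{P'})$ and hence $|\phi(\Ss(I_{k-1}^{P'}))|\geq|\phi(\Ss_P(P'))|\geq s$. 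Combining these completes the induction.

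The step I expect to require the most care is the bookkeeping that threads the two applications through a \emph{single} probe $P$: the induction hypothesis must be invoked on the base copy $\Cc^{(k-1)}$ and the hypothesis on the inserted copy $\Cc_P$ sitting in that very $P$, and the two pillar inclusions $\Ss^{(k-1)}(I_i^P)\subseteq\Ss^{(k)}(I_i^{P'})$ and $\Ss_P(P')\subseteq\Ss(I_{k-1}^{P'})$ must be aligned so that the first $k-1$ pillars inherit their colours from $P$ while the new pillar $I_{k-1}^{P'}$ inherits its colours from $\Cc_P$. The underlying geometric facts are essentially handed to us by the recursive definition, so the subtlety lies in correctly invoking the scaling-invariance of the hypothesis and in confirming that the colouring restrictions remain valid $t$-fold colourings.
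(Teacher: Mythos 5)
Your proposal is correct and follows essentially the same route as the paper's own proof: induction on $k$, with the inductive hypothesis applied to $\phi_{\vert\Ss^{(k-1)}}$ to produce the probe $P$, the hypothesis on $\Cc$ applied to $\phi_{\vert\Ss_P}$ for the copy $\Cc_P$ inserted in a root of that same $P$ to produce $P'\in\Pp_P\subseteq\Pp^{(k)}$, and the colour counts transferred to all $k$ pillars exactly via the two inclusions $\Ss^{(k-1)}(I_i^P)\subseteq\Ss^{(k)}(I_i^{P'})$ and $\Ss_P(P')\subseteq\Ss^{(k)}(I_{k-1}^{P'})$. Your explicit remark on the scaling-invariance of the hypothesis (which the paper uses implicitly when invoking the assumption on $\Cc$ for the $R_P$-scaled copy) is a welcome clarification, not a deviation.
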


\begin{proof}
    The proof is an induction on $k$. 
    
    Suppose that $k=1$. Observe that $\Cc^{(1)}=\Cc$. Let $\phi$ be a $t$-fold colouring of $G(\Cc^{(1)})$. There exists a probe $P$ such that $\big|\phi(\Ss(P))\big|\geq s$ by assumption on $\Cc$. Since every segment of $\Ss(P)$ crosses the only pillar $I$ of $P$ vertically, $\big|\phi(\Ss(I))\big|\geq s$. Therefore, $P'=P$ satisfies the conclusion of \Cref{lemma:pillars}. 
    
    Suppose that $k\geq 2$. 
    Let $\phi$ be a $t$-fold colouring of $G(\Cc^{(k)})$. By the inductive hypothesis, there exists a probe $P$ of $\Cc^{(k-1)}$ such that for every $i\in \{0,\dots,k-2\}$, every pillar $I^P_i$ of $P$ satisfies $\big|\,\phi_{\vert \Ss^{(k-1)}}(\Ss^{(k-1)}(I_i^P))\,\big|\geq s$.
    By the definition of $\Cc^{(k)}$, there exists a square $R_P$ in a root of $P$ with an $R_P$-scaled copy $\Cc_P=(\Ss_P,\Pp_P)$ of $\Cc$. By assumption on $\Cc$, there exists a probe $P'\in \Pp_P \subseteq \Pp^{(k)}$ such that $\big|\,\phi_{\vert \Ss_P}(\Ss_P(P'))\,\big|\geq s$. 
    Since every segment of $\Ss_P(P')$ crosses $I^{P'}_{k-1}$ vertically, $\big|\,\phi(\Ss^{(k)}(I^{P'}_{k-1}))\,\big|\geq \big|\,\phi_{\vert \Ss_P}(\Ss_P(I^{P'}_{k-1}))\,\big| \geq \big|\,\phi_{\vert \Ss_P}(\Ss_P(P'))\,\big|\geq s$. 
    For every $i\in \{0,\dots,k-2\}$, every segment of $\Ss^{(k-1)}(I_i^P)\subseteq \Ss^{(k)}(I_i^P) $ crosses the pillar $I^{P'}_i$ vertically. Therefore, we also have $\big|\,\phi(\Ss^{(k)}(I^{P'}_i))\,\big| \geq \big|\,\phi_{\vert \Ss^{(k-1)}}(\Ss^{(k-1)}(I_i^P))\,\big|\geq s$.
\end{proof}

\section{Lower bound for even \texorpdfstring{$\omega$}{omega}}\label{sec:even}

In this section, we show the lower bound for the $\chi$-binding function of $d$-DIR for even clique numbers.
First, we exhibit, for any $t$ and $d$, a triangle-free $d$-DIR configuration with $t$-fold chromatic number at least $2td$. Then, a $t$-blowup of this construction yields one with clique number $2t$ and chromatic number at least $2td$, establishing the even case in Theorem~\ref{thm:main}.

\begin{theorem}\label{thm:main_construction_omega_even}
    For all positive integers $t$ and $d$, there exists a triangle-free configuration $\mathcal{C}_{t,d} = (\mathcal{S}_{t,d}, \mathcal{P}_{t,d})$
    with slope number at most $d$ such that for every $t$-fold colouring $\phi$ of $G(\mathcal{C}_{t,d})$, there exists a probe $P \in \mathcal{P}_{t,d}$ such that $|\phi(\mathcal{S}_{t,d}(P))| \geq 2td$.
\end{theorem}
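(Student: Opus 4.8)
The plan is to prove the statement by induction on $d$, keeping the per-probe formulation as the induction hypothesis so that it composes cleanly with \Cref{lemma:pillars}. For each fixed $t$, the invariant I would maintain is: there is a triangle-free configuration $\Cc_{t,d}$ of slope number at most $d$ such that every $t$-fold colouring $\phi$ of $G(\Cc_{t,d})$ admits a probe $P$ with $|\phi(\Ss_{t,d}(P))|\ge 2td$. As base case $d=0$ I would take the empty configuration equipped with a single segment-free probe (whose root is the whole of $\Uu$); since $\Ss(P)=\emptyset$ the bound $2td=0$ holds, and all the content of the theorem then sits in one inductive step that simultaneously adds a single new slope and raises the guaranteed probe-cost by exactly $2t$.

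For the step, assume $\Cc_{t,d-1}$ has been built. First I would amplify it: choose a large integer $k=k(t,d)$ and pass to $\Cc_{t,d-1}^{(k)}$. Because $\Cc_{t,d-1}$ forces some probe to see at least $s:=2t(d-1)$ colours, \Cref{lemma:pillars} provides, for every $t$-fold colouring, a single probe $P'$ all of whose pillars $I_0,\dots,I_{k-1}$ satisfy $|\phi(\Ss(I_j))|\ge s$. These pillars are pairwise disjoint sub-rectangles crossing $P'$, each already carrying a palette of at least $s$ colours on old-slope segments, and $P'$ still has a free root in which to work. This reduces the whole step to a purely local task: given many pillars, each guaranteed to display at least $s$ colours, install segments of one fresh slope $\sigma$ forcing some newly created probe to display $2t$ further colours beyond those inherited from a single pillar.

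The slope-$\sigma$ gadget is the crux. I would realise it as a single interval layer: inside a free root of $P'$, via $R$-scaled copies, insert a family of segments all of slope $\sigma$ — a one-direction, hence triangle-free, sub-configuration whose consecutive segments overlap — thereby generating new probes $Q\subseteq P'$. The placement must be arranged so that each $\Ss(Q)$ consists of one pillar's old-slope segments (supplying the inherited $\ge s$ colours) together with slope-$\sigma$ segments, all pairwise disjoint as the probe definition demands, while the slope-$\sigma$ segments are made adjacent to old segments elsewhere so that their colours are pushed off the inherited palette. The reason one new direction should yield precisely $2t$ is that a slope-$\sigma$ layer behaves like a path under any $t$-fold colouring — consecutive overlaps force disjoint $t$-sets — so across $k$ pillars the layer cannot keep all of its colours inside the $s$ colours of any single pillar. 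The decisive move is then a pigeonhole over the $k$ pillars: assuming for contradiction that every probe of the new configuration shows fewer than $s+2t$ colours, the $\ge s$ colours of each pillar together with the colours forced along the slope-$\sigma$ layer would all have to be squeezed into a window of size $<s+2t$, and for $k$ large this is incompatible with the path constraints, exhibiting a probe with $s+2t=2td$ colours. Choosing $k$ large enough to run this pigeonhole, and in particular pinning the increment to exactly $2t$ rather than something larger or smaller, is where I expect the main difficulty to lie.

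Finally I would verify the two structural invariants. Triangle-freeness is preserved because the only new adjacencies are (i) among slope-$\sigma$ segments, which form an interval graph and hence contain no triangle, and (ii) between slope-$\sigma$ segments and old segments, which I would place so that no old segment meets two mutually intersecting new segments; together with the triangle-freeness carried through $\Cc_{t,d-1}^{(k)}$ this keeps $G(\Cc_{t,d})$ triangle-free. The slope number grows by exactly one, since $\sigma$ is the only new direction and $R$-scaled copies preserve the set of slopes, so $\Cc_{t,d}$ has at most $d$ slopes. This closes the induction and yields $\Cc_{t,d}$ forcing the required bound of $2td$ at some probe.
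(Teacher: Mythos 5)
Your high-level frame matches the paper's: induction on $d$ with the per-probe invariant, base case $(\emptyset,\{\mathcal{U}\})$, amplification via $\mathcal{C}_{t,d-1}^{(k)}$ together with \Cref{lemma:pillars}, and a gadget of one fresh slope inserted in a root. But the core of your step --- forcing the full increment of $2t$ new colours in a \emph{single} round, via a chain of overlapping slope-$\sigma$ segments and a pigeonhole over many pillars --- is precisely the part you flag as ``where the main difficulty lies'', and it fails for a structural reason built into the definition of a probe: all segments meeting a probe must be pairwise disjoint. The only mechanism that forces two new-slope segments to carry \emph{disjoint} $t$-sets is properness, i.e.\ they must intersect each other; but then they can never lie in a common probe. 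Conversely, the slope-$\sigma$ segments inside any fixed probe form an independent set of your chain, and non-adjacent vertices of a path are never forced to receive disjoint palettes: an adversary may colour the chain with three or more palettes of $t$ fresh colours each, arranged so that all chain segments inside any given probe share one palette, while the crossings with pillars only push these palettes off the inherited $s=2t(d-1)$ colours --- they cannot force two disjoint new segments apart from \emph{each other}. So no matter how large you take $k$, your gadget caps the forced gain per probe at roughly $t$, not $2t$; enlarging the pigeonhole does not remove this cap, so the induction does not close.

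The paper's proof is organised around exactly this obstruction. It accepts that one round cannot yield $2t$ and instead runs an \emph{inner} induction, building configurations $\mathcal{H}_i$ with the weaker invariant that some probe shows at least $2t(d-1)+2t\left(1-\frac{1}{2^{i}}\right)$ colours. Each round inserts fresh copies of $\mathcal{H}=\mathcal{C}_{t,d-1}^{(4t+1)}$ into roots of the previous round's probes; in each layer it places just \emph{two} new-slope segments $D_1^j,D_2^j$ sharing a point (hence colour-disjoint), with $D_1^j$ crossing the pillars above index $j$ and $D_2^j$ crossing pillar $j$ itself. A pigeonhole over the $4t$ layers produces an index $j_0$ whose palette $\Phi_{j_0}$ is contained both in the union of earlier palettes and in the union of later ones, which makes $\phi(D_1^{j_0})$, $\phi(D_2^{j_0})$ and $\Phi_{j_0}$ pairwise disjoint, i.e.\ $2td$ colours in total. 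But since $D_1^{j_0}$ and $D_2^{j_0}$ intersect, they must be routed into two \emph{different} probes, and an averaging argument only guarantees that one of these probes captures at least half of the remaining deficit relative to the previous round's colour set $Z_{i-1}$. Iterating this halving $\lfloor\log(2t)\rfloor+1$ times yields the bound $2td$. This deficit-halving inner recursion (and the two-probe averaging that drives it) is the idea missing from your proposal; without it, or some genuinely new mechanism for forcing colour-disjointness among pairwise disjoint segments, the one-shot increment of $2t$ cannot be achieved.
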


\begin{proof}
    We proceed by induction on $d$.
    For $d=0$, the configuration $(\emptyset, \{\mathcal{U}\})$
    satisfies the statement of the theorem.
    
    Suppose that $d>0$ and that $\mathcal{C}_{t,d-1}$ is already constructed.
    Let $\mathcal{H} = \mathcal{C}_{t,d-1}^{(4t+1)}=(\mathcal{S}_\mathcal{H},\mathcal{P}_\mathcal{H})$
    be the copy of $4t+1$ of these configurations as defined in Subsection~\ref{subsec:copying}.
    Let $\Gamma$ be the set of the slopes of the segments in $\mathcal{S}_{t,d-1}$. We define the slope $\gamma$ as the minimum aspect ratio over all probes of $\Pp_\Hh$, divided by $4t$. 

    We will construct a family of 
    triangle-free
    configurations $\mathcal{H}_i = (\mathcal{S}_{i},\mathcal{P}_{i})$ for all $i \geq 0$ such that
    \begin{enumerate}
        \item for every segment $S \in \mathcal{S}_{i}$, the slope of $S$ is in $\Gamma \cup \{\gamma\}$, and
        \label{item:induction_i_slopes}
        \item for every $t$-fold colouring $\phi$ of $G(\mathcal{H}_{i})$, there is a probe $P \in \mathcal{P}_i$
        such that $|\phi(\mathcal{S}_i(P))| \geq 2t(d-1)+2t(1-\frac{1}{2^{i}})$. \label{item:induction_i_number_of_colours}
    \end{enumerate}
    For $i = \floor{\log (2t)}+1$, \Cref{item:induction_i_number_of_colours} implies that $|\phi(\mathcal{S}_i(P))| \geq 2td$, and so it suffices to take $\mathcal{C}_{t,d} = \mathcal{H}_i$.

    We build $\Hh_i$ as follows.
    For $i=0$, we take $\mathcal{H}_0 = \mathcal{C}_{t,d-1}$. \Cref{item:induction_i_slopes} holds by definition of $\Hh_0$ and \Cref{item:induction_i_number_of_colours} holds by the inductive hypothesis on $\Cc_{t,d-1}$. 
    
    Now, assume that $i>0$. We will build $\Ss_i$ and $\Pp_i$ iteratively.
    Initialise $\mathcal{S}_i$ to $\mathcal{S}_{i-1}$ and $\mathcal{P}_i$ to $\emptyset$.
    For each probe $P\in \Pp_{i-1}$, let $R$ be a square in a root of $P$, and add an $R$-scaled copy $\Hh_P$ of $\Hh$ in $R$. 
    At this point, the configuration is still triangle-free as it is a disjoint union of $\Hh_{i-1}$ and copies of $\Hh$.
    For every probe $Q$ in $\Hh_P$, we divide $Q$ vertically into $4t$ rectangles of equal height with the same width as $Q$. We call these rectangles the \emph{layers} of $Q$. Let $(L_j(Q))_{j\in \{0,\dots,4t-1\}}$ be the layers of $Q$ starting from the top. 

    First, we add the top layer $L_0(Q)$ to $\mathcal{P}_i$ as a single probe. Then, for every $j\in \{1,\dots,4t-1\}$, we add two segments in the interior of $L_j(Q)$ to $\Ss_i$, with slope $\gamma$, that intersect at exactly one point: $D^j_1$ crossing exactly the pillars $I^Q_{j+1},\dots,I^Q_{4t}$ horizontally, and $D^j_2$ crossing $I^Q_j$ horizontally and no other pillars. Since the slope of the diagonal of a layer is at least $\gamma$, such segments are well-defined. 
    Moreover, the configuration is still triangle-free. On the one hand, any $D^j_1$ or $D^j_2$ intersects a set of disjoint segments over all pillars. On the other, any pillar, and thus any segment crossing it vertically, cannot intersect both $D^j_1$ and $D^j_2$.
    See \Cref{fig:layers}.
    
    \begin{figure}[H]
        \centering
        \includegraphics{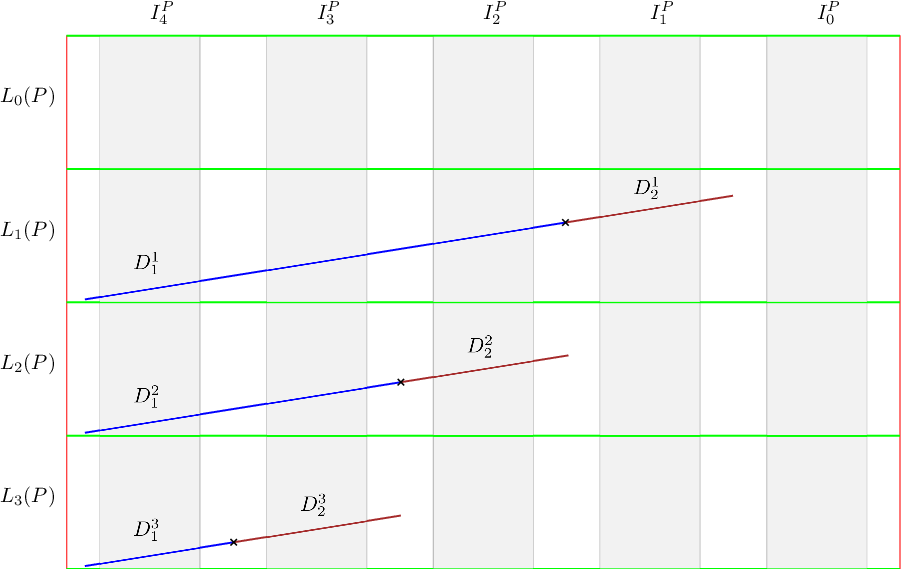}
        \caption{Layers of a probe $P$ drawn in green, with new segments $D_1^j$ and $D_2^j$. Example for $t=1$.}
        \label{fig:layers}
    \end{figure}
    
    Finally, we add to $\mathcal{P}_i$ two probes $P_{D^j_1},P_{D^j_2}$, with $P_{D^j_1}$ crossed vertically by exactly $D^j_1$ and $I^Q_0,\dots,I^Q_j$, and $P_{D^j_2}$ crossed vertically by exactly $D^j_2$ and $I^Q_0,\dots,I^Q_{j-1}$. See \Cref{fig:new_probes}.
    
    \begin{figure}[H]
        \centering
        \includegraphics{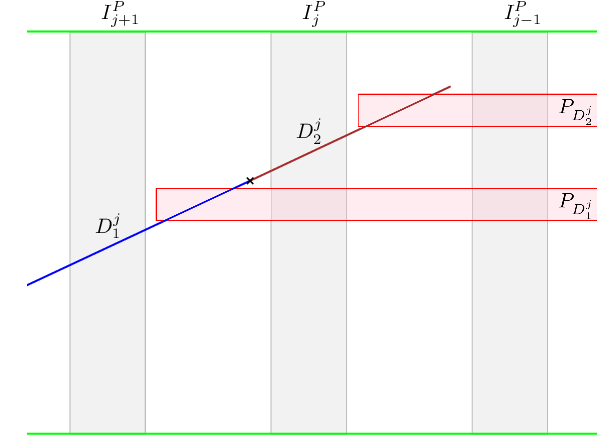}
        \caption{The two new probes $P_{D^j_1}$ and $P_{D^j_2}$ inside the layer $L_j(P)$.}
        \label{fig:new_probes}
    \end{figure}

    It follows from the construction that $\mathcal{P}_i$ is a set of probes for $\mathcal{S}_i$, and that every segment in $\mathcal{S}_i$ has its slope in $\Gamma \cup \{\gamma\}$ so \Cref{item:induction_i_slopes} holds.
    It remains to prove that for every $t$-fold colouring $\phi$, there is a probe $P \in \mathcal{P}_i$ such that $|\phi(\mathcal{S}_i(P))| \geq 2t(d-1)+2t(1-\frac{1}{2^{i}})$.
    Let $\phi$ be a $t$-fold colouring of $G(\Hh_i)$. By the inductive hypothesis, there exists $P \in \mathcal{P}_{i-1}$
    such that $|\phi(\mathcal{S}_{i-1}(P))| \geq 2t(d-1)+2t(1-\frac{1}{2^{i-1}})$.
    By \Cref{lemma:pillars} applied to $\Hh = \mathcal{C}_{t,d-1}^{(4t+1)}$, there exists a probe $Q$ of $\mathcal{H}_P$ such that every pillar $I^Q_j$ satisfies $|\phi(\Ss_i(I^Q_j))|\geq 2t(d-1)$. For every $j \in \{0,\dots,4t-1\}$, let $\Phi_j = \phi(\Ss_i(I^Q_j))$.
    Recall that the top layer $L_0(Q)$ was added as a probe, and it crosses all the $I^Q_j$s horizontally. If it contains at least $2t(d-1)+2t(1-\frac{1}{2^{i}})$ colours, then this probe satisfies \Cref{item:induction_i_number_of_colours}. So, from now on, we may assume that $Q$ contains fewer than $2t(d-1)+2t(1-\frac{1}{2^{i}})$ colours. 
    Since $|\Phi_0| \geq 2t(d-1)$, we have $2t(d-1)+2t(1-\frac{1}{2^{i}}) - |\Phi_0| < 2t$. 
    Therefore, at most $2t-1$ indices $j \in \{1,\dots,4t-1\}$ are such that the set $\Phi_j$ contains a colour not in $\bigcup_{j'\leq j-1} \Phi_{j'}$.
    Thus, for at least $4t-1-2t+1=2t$ indices $j \in \{1,\dots,4t-1\}$, $\Phi_{j} \subseteq \bigcup_{j'\leq j-1} \Phi_{j'}$.
    Symmetrically, for at least $2t$ indices $j \in \{1,\dots,4t-1\}$, we have $\Phi_{j} \subseteq \bigcup_{j'\geq j+1} \Phi_{j'}$.
    Thus, there is an index $j_0 \in \{1,\dots,4t-1\}$ such that
    $\Phi_{j_0} \subseteq \bigcup_{j'\geq j_0+1} \Phi_{j'}$ and $\Phi_{j_0} \subseteq \bigcup_{j' \leq j_0-1} \Phi_{j'}$.
    Let $A = \Phi_{j_0}$, $B = \bigcup_{j' \leq j_0-1} \Phi_{j'}$
    and $C = \bigcup_{j' \geq j_0+1} \Phi_{j'}$. We have $A \subseteq B$ and $A \subseteq C$.
    Consider the layer $L_{j_0}(Q)$ and the corresponding segments $D_1^{j_0},D_2^{j_0}$. Since $D_1^{j_0}$ and $D_2^{j_0}$ share a common point, $\phi(D_1^{j_0}) \cap \phi(D_2^{j_0}) = \emptyset$. Since $D_1^{j_0}$ crosses $I^Q_{j_0+1},\dots,I^Q_{4t}$ horizontally, we have $\phi(D_1^{j_0}) \cap C = \emptyset$. Since $A \subseteq C$, we also have $\phi(D^{j_0}_1) \cap A = \emptyset$. Moreover, $D_2^{j_0}$ crosses $I^Q_{j_0}$ horizontally, so $\phi(D^{j_0}_2) \cap A = \emptyset$. As a result, $\phi(D^{j_0}_1)$, $\phi(D^{j_0}_2)$, and $A$ are pairwise disjoint.
    Thus, $|\phi(D^{j_0}_1) \cup \phi(D^{j_0}_2) \cup A| = |\phi(D^{j_0}_1)| + |\phi(D_2^{j_0})| + |A| \geq t + t + 2t(d-1) = 2td$.
    Hence, letting $Z_{i-1} = \phi(\mathcal{S}_{i-1}(P))$, we have $|(\phi(D^{j_0}_1) \cup \phi(D^{j_0}_2) \cup A) \setminus Z_{i-1}| \geq 2td - |Z_{i-1}|$. Therefore, there is some $\alpha \in\{1,2\}$ such that $|(\phi(D^{j_0}_\alpha) \cup A) \setminus Z_{i-1}| \geq \frac{2td-|Z_{i-1}|}{2}$. It follows that
    \[
    |\phi(D^{j_0}_\alpha) \cup A \cup Z_{i-1}| \geq \frac{2td+|Z_{i-1}|}{2} \geq 2t(d-1) + 2t\left(1-\frac{1}{2^i}\right).
    \]
    Then $\phi\left(\Ss_i\left(P_{D^{j_0}_\alpha}\right)\right)$ contains at least 
    $|\phi(D^{j_0}_\alpha) \cup B \cup Z_{i-1}| \geq |\phi(D^{j_0}_\alpha) \cup A \cup Z_{i-1}| \geq 2t(d-1) + 2t(1-\frac{1}{2^i})$ colours. 
    Therefore, \Cref{item:induction_i_number_of_colours} always holds for $\mathcal{H}_i$. This concludes the proof of the theorem.
\end{proof}

\begin{corollary}
    For all positive integers $t$ and $d$, there exists a multiset of segments $\Ss'_{t,d} \subseteq \Uu$ with slope number at most $d$ such that
     $\omega(G(\Ss'_{t,d})) = 2t$ and $\chi(G(\Ss'_{t,d}))\geq 2td$.
\end{corollary}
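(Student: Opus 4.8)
The plan is to take the configuration $\Cc_{t,d} = (\Ss_{t,d}, \Pp_{t,d})$ furnished by \Cref{thm:main_construction_omega_even} and let $\Ss'_{t,d}$ be its $t$-blowup, i.e.\ the multiset consisting of $t$ identical copies of each segment of $\Ss_{t,d}$. Since blowing up only duplicates segments without creating new directions, $\Ss'_{t,d}$ still has slope number at most $d$ and its segments remain inside $\Uu$. The crucial point is that $G(\Ss'_{t,d})$ is precisely the $t$-blowup of the triangle-free graph $G(\Ss_{t,d})$: the $t$ copies of a single segment pairwise intersect and form a $K_t$, the $t\times t$ copies sitting over an intersecting pair form a $K_{t,t}$, and non-intersecting segments contribute no edges. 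I would carry out the whole argument through this graph-theoretic description.

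For the clique number, I would first argue that $G(\Ss_{t,d})$ has at least one edge: otherwise it would be edgeless and admit a $t$-fold colouring with only $t$ colours, contradicting the conclusion of \Cref{thm:main_construction_omega_even} that every $t$-fold colouring uses at least $2td \ge 2t > t$ colours (using $d\ge 1$). Being triangle-free with an edge, $G(\Ss_{t,d})$ has clique number exactly $2$. The standard behaviour of blowups then gives $\omega(G(\Ss'_{t,d})) = t\cdot\omega(G(\Ss_{t,d})) = 2t$: the two classes over any edge together span a $K_{2t}$ (lower bound), while any clique of the blowup projects to a clique of $G(\Ss_{t,d})$ and meets each class in at most $t$ vertices, capping its size at $2t$ (upper bound).

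For the chromatic number, I would invoke the observation recorded in the preliminaries that a proper colouring of the $t$-blowup of a graph is the same object as a $t$-fold colouring of the graph, so that $\chi(G(\Ss'_{t,d})) = \chi_t(G(\Ss_{t,d}))$, the $t$-fold chromatic number. It then suffices to show $\chi_t(G(\Ss_{t,d})) \ge 2td$. Given any $t$-fold $a$-colouring $\phi$, \Cref{thm:main_construction_omega_even} yields a probe $P$ with $|\phi(\Ss_{t,d}(P))| \ge 2td$; since every colour counted belongs to the palette $\{1,\dots,a\}$, this forces $a \ge 2td$. Hence no $t$-fold colouring uses fewer than $2td$ colours, giving $\chi(G(\Ss'_{t,d})) = \chi_t(G(\Ss_{t,d})) \ge 2td$.

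I expect no single step to be a genuine obstacle, since the heavy lifting is already done by \Cref{thm:main_construction_omega_even}; the only points demanding care are the two directions of the clique-number equality (in particular ruling out the edgeless case, so that $\omega$ is exactly $2t$ and not smaller) and making the translation between probe-colour counts, the $t$-fold chromatic number, and the ordinary chromatic number of the blowup fully precise.
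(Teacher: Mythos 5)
Your proposal is correct and follows essentially the same route as the paper: take the $t$-blowup of the configuration from \Cref{thm:main_construction_omega_even}, identify proper colourings of the blowup with $t$-fold colourings of the original, and apply the probe conclusion to force at least $2td$ colours. In fact you are slightly more careful than the paper on the clique number, since you explicitly rule out the edgeless case (via the $t$-fold colouring contradiction) to get $\omega = 2t$ exactly, a point the paper leaves implicit.
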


\begin{proof}
    Let $\Cc_{t,d} = (\Ss_{t,d},\Pp_{t,d})$ be a configuration given by \Cref{thm:main_construction_omega_even}. 
    Let $\Ss'_{t,d}$ be a $t$-blowup of $\Ss_{t,d}$,
    that is a configuration obtained from $\Ss_{t,d}$ by replacing every segment $v \in \Ss_{t,d}$ by $t$ identical copies $v_1, \dots, v_t$.
    Since the graph $G(\Ss_{t,d})$ is triangle-free, we have $\omega(G(\Ss'_{t,d})) = 2t$. 
    Let $\phi$ be a colouring of $G(\Ss'_{t,d})$. 
    The colouring $\phi$ corresponds to a $t$-fold colouring $\phi_t$ of $G(\Ss_{t,d})$ defined by $\phi_t(v) = \{\phi(v_1), \dots, \phi(v_t)\}$ for all $v\in V(G(\Ss_{t,d}))$. 
    By \Cref{thm:main_construction_omega_even}, there exists a probe $P\in \Pp_{t,d}$ such that $|\phi_t(\Ss_{t,d}(P))|\geq 2td$. 
    Hence, by definition of $\phi_t$, $\phi$ uses at least $2td$ colours as claimed.
\end{proof}

\section{Lower bound for odd \texorpdfstring{$\omega$}{omega}}\label{sec:oddlower}

In this section, we certify the lower bound for the $\chi$-binding function in the case of odd clique numbers. We rely on the construction of $d$-DIR graphs with $(t+1)$-fold chromatic number at least $2(t+1)d$ as guaranteed by \Cref{thm:main_construction_omega_even}.
Instead of blowing up each segment by $t+1$ as in the even case, many blowups will be $t$-blowups, ensuring the clique number increases only to $2t+1$. We  do so in a way that also ensures the chromatic number is $2td+1$, as desired.

\begin{theorem}\label{thm:lower_bound_odd_omega}
    For every nonnegative integer $t$ and every positive integer $d$, there exists a multiset of segments $\mathcal{S}''_{t,d}$ with slope number at most $d$ such that 
    $\omega(G(\mathcal{S}''_{t,d})) = 2t+1$ and $\chi(G(\mathcal{S}''_{t,d})) \geq 2td+1$.
\end{theorem}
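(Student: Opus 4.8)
The plan is to instantiate \Cref{thm:main_construction_omega_even} with parameter $t+1$ and then take a \emph{mixed} blowup: most segments are blown up by a factor $t$, but one carefully chosen independent set of segments is blown up by $t+1$. Concretely, let $\Cc_{t+1,d}=(\Ss_{t+1,d},\Pp_{t+1,d})$ be the configuration provided by \Cref{thm:main_construction_omega_even}, and write $G=G(\Cc_{t+1,d})$. Since $G$ is triangle-free with slope number at most $d$, each slope induces an interval graph with $\chi=\omega\le 2$; colouring the (at most $d$) slopes with disjoint palettes of two colours each exhibits a proper $2d$-colouring of $G$, with colour classes $V_1,\dots,V_{2d}$ (this is the simple bound recalled in the introduction). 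I would fix such a colouring once and for all, choose $W=V_1$ to be a colour class incident to at least one edge, and define $\Ss''_{t,d}$ to be the blowup of $\Ss_{t+1,d}$ in which every segment of $W$ is replaced by $t+1$ copies and every other segment by $t$ copies.

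The clique number is then immediate. Writing $G'=G(\Ss''_{t,d})$, the set $W$ is independent and $G$ is triangle-free, so every clique of $G'$ lives inside the blowup of a single edge $uv$; such a clique has size $b_u+b_v$ with $b_u,b_v\in\{t,t+1\}$ and at most one of $u,v$ in $W$, hence size at most $2t+1$, with equality on an edge joining $W$ to its complement. Thus $\omega(G')=2t+1$, and the slope number is unchanged by blowing up.

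For the chromatic lower bound I would show that any proper colouring $c$ of $G'$ using colours in $\{1,\dots,N\}$ forces $N\ge 2td+1$. Such a $c$ yields a map $\phi$ assigning to each segment $v$ its set of colours, with $|\phi(v)|=t+1$ if $v\in W$ and $|\phi(v)|=t$ otherwise, and $\phi(u)\cap\phi(v)=\emptyset$ on every edge. The point is that this is not quite a $(t+1)$-fold colouring, so \Cref{thm:main_construction_omega_even} does not apply directly; I would repair this by adding to each segment $v\notin W$ a single fresh dummy colour $d_{j}$ determined by the colour class $V_j\ni v$ (with $j\in\{2,\dots,2d\}$). Because the $d_j$ are new and distinct classes receive distinct dummies, a short case check on each type of edge shows that the map $\psi$ defined by $\psi(v)=\phi(v)$ for $v\in W$ and $\psi(v)=\phi(v)\cup\{d_{j}\}$ otherwise is a genuine proper $(t+1)$-fold colouring of $G$. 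Applying \Cref{thm:main_construction_omega_even} yields a probe $P$ with $|\psi(\Ss_{t+1,d}(P))|\ge 2(t+1)d$. Since at most $2d-1$ dummy colours exist in total, at most $2d-1$ of them occur on $\Ss_{t+1,d}(P)$, so the genuine colours satisfy $|\phi(\Ss_{t+1,d}(P))|\ge 2(t+1)d-(2d-1)=2td+1$; as these lie among the $N$ colours of $c$, we conclude $N\ge 2td+1$.

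The main obstacle, and essentially the only delicate point, is this loss accounting: converting the mixed-fold colouring into a $(t+1)$-fold colouring must cost at most $2d-1$ colours \emph{on every individual probe}, since $2d-1$ is exactly the gap between the even-case yield $2(t+1)d$ and the target $2td+1$. Tying the dummy colours to a fixed proper $2d$-colouring of $G$ makes this automatic — the total number of dummies is capped at $2d-1$ — while that same colouring supplies the independent set $W$. The degenerate case $t=0$ (where a $t$-blowup deletes segments) can be checked by hand, a single segment giving $\omega=1$ and $\chi\ge 1$.
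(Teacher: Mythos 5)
Your proposal is correct and takes essentially the same approach as the paper: both decompose $G(\Cc_{t+1,d})$ into $2d$ independent sets (the slope partition refined by bipartiteness of each triangle-free interval graph), blow up exactly one of these sets by $t+1$ and the rest by $t$, and then lift a proper colouring of the blowup to a $(t+1)$-fold colouring of $G(\Cc_{t+1,d})$ by adjoining at most $2d-1$ fresh dummy colours, one per non-special independent set, yielding $N+2d-1 \ge 2(t+1)d$. The only cosmetic differences are that you do the final count on a single probe rather than on the whole colour set, and that you explicitly choose the $(t+1)$-blown-up class to be incident to an edge so as to pin down $\omega=2t+1$ exactly.
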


\begin{proof}
    Let $t$ be a nonnegative integer and let $d$ be a positive integer.
    By \Cref{thm:main_construction_omega_even}, there is a triangle-free configuration $\mathcal{C}_{t+1,d} = (\mathcal{S}_{t+1,d}, \mathcal{P}_{t+1,d})$
    with slope number at most $d$
    such that every $(t+1)$-fold colouring of $G(\mathcal{C}_{t+1,d})$ uses at least $2(t+1)d$ colours.
    Let $\mathcal{S}^1, \dots, \mathcal{S}^d$ be the partition of $\mathcal{S}_{t+1,d}$ according to the slopes of the segments.
    For every $i \in [d]$, $G(\mathcal{S}^i)$ is a triangle-free interval graph, and so it is bipartite.
    Hence there is a partition $\mathcal{S}^{i,1}, \mathcal{S}^{i,2}$ of $\mathcal{S}^i$ such that the segments in $\mathcal{S}^{a,i}$ are pairwise disjoint for each $a \in \{1,2\}$.
    Now, let $\mathcal{S}''_{t,d}$
    be obtained from $\mathcal{S}_{t+1,d}$ by blowing up every segment $v$ in $\bigcup_{i \in [d-1]} \mathcal{S}^i$ and $\mathcal{S}^{d,1}$ into $t$ copies $v_1, \dots, v_{t}$, and every segment $v$ in $\mathcal{S}^{d,2}$ into $t+1$ copies $v_1, \dots, v_{t+1}$.

    First, by construction, $\omega(G(\mathcal{S}''_{t,d})) \leq 2t+1$.
    We now show that $\chi(G(\mathcal{S}''_{t,d})) \geq 2td+1$.
    Let $\phi$ be a $k$-colouring of $G(\mathcal{S}''_{t,d})$. 
    Let us introduce new colours $(1,i),(2,i)$, for $i \in [d]$, and suppose they are not in the image of $\phi$.
    For every $i \in [d-1]$ and  every $v \in \mathcal{S}^{i,a}$, where $a \in \{1,2\}$, we
    let $\psi(v) = \{\phi(v_1), \dots,\phi(v_t), (a,i)\}$.
    For every $v \in \mathcal{S}^{d,1}$, we let $\psi(v) = \{\phi(v_1), \dots, \phi(v_t), (1,d)\}$.
    Finally, for every $v \in \mathcal{S}^{d,2}$, we let $\psi(v) = \{\phi(v_1), \dots, \phi(v_{t+1})\}$.
    Note that $\psi(v)$ has size exactly $t+1$ for every $v \in \mathcal{S}_{t+1,d}$, and
    $\psi$ uses at most $k+2d-1$ colours (since we did not use the colour $(2,d)$).
    Moreover, as $\phi$ is proper and we invoke the bipartitions of the $\mathcal{S}^i$s, we deduce that $\psi$ is a $(t+1)$-fold colouring of $\mathcal{S}_{t+1,d}$.
    This implies that $k +2d-1 \geq 2(t+1)d$, and so $\phi$ uses $k \geq 2td+1$ colours.
    This proves that $\chi(G(\mathcal{S}''_{t,d})) \geq 2td+1$.
\end{proof}

\section{Upper bound for odd \texorpdfstring{$\omega$}{omega}}\label{sec:oddupper}

In this section, we upper bound the $\chi$-binding function for $d$-DIR in the case of odd clique numbers, which combined with the result of the previous section settles the odd case in Theorem~\ref{thm:main}.

Let $G$ be an interval graph and
let $\Ii = (I_v \mid v \in V(G))$ be an interval representation of $G$. 
Let $\omega$ be an integer with $\omega \geq \omega(G)$.
For any $x \in \mathbb{R}$, we define the set $S_x=\{u \in V(G) \mid x \in I_u\}$ of vertices corresponding to intervals containing $x$.
A vertex $v$ of $G$ is said to be \emph{$\omega$-uncovered} if there exists $x \in I_v$ such that $|S_x| \leq \frac{\omega - 1}{2}$. 
Observe that $S_x$ consists of a clique of $\omega$-uncovered vertices only. 
We use the following lemma appearing in~\cite{Kostochka2002} as a tool to build colourings of $d$-DIR graphs in the case of odd $\omega$.

\begin{lemma}[{\cite[Lemma~1]{Kostochka2002}}]\label{lemma:colouring_special_vertices_in_interval_graphs}
    Let $G$ be an interval graph with clique number at most $\omega$. There is a colouring $\phi\colon V(G) \to \{0, \dots,\omega-1\}$ of $G$ such that $\phi(v)\neq 0$ for every $\omega$-uncovered vertex $v$.
\end{lemma}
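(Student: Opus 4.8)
The plan is to isolate a single independent set to carry colour $0$. First I would reduce to the case $\omega(G)=\omega$: if $\omega(G)\le \omega-1$ then, since interval graphs are perfect, $G$ has a proper colouring with the $\omega-1$ colours $\{1,\dots,\omega-1\}$, which never uses $0$ and so trivially satisfies the lemma. So assume $\omega(G)=\omega$, fix the representation $\Ii$, and perturb it so that all endpoints are distinct. Call a vertex \emph{covered} if it is not $\omega$-uncovered. The goal then becomes: find an independent set $A$ consisting of covered vertices that meets every maximum clique of $G$. Given such an $A$, assigning colour $0$ to $A$ and colouring the interval graph $G-A$, whose clique number is now at most $\omega-1$, with $\{1,\dots,\omega-1\}$ (again by perfection) produces the desired $\phi$.

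The enabling structural fact is that \emph{every maximum clique contains a covered vertex} (equivalently, $\omega(G[U])\le\omega-1$, where $U$ is the set of $\omega$-uncovered vertices). To see this, let $K$ be a clique with $|K|=\omega$ and write $\ell_v,r_v$ for the endpoints of $I_v$. By the Helly property its members share a common point, and hence all of them contain both $L:=\max_{v\in K}\ell_v$ and $R:=\min_{v\in K}r_v$; thus $|S_x|\ge\omega$ for every $x\in[L,R]$, so no light point lies in $[L,R]$. If $v\in K$ is $\omega$-uncovered with light point $x<L$, write $v=v_i$ for its position in the order of increasing left endpoint; then every $v_j$ with $j\le i$ satisfies $\ell_{v_j}\le\ell_{v_i}\le x$ and $r_{v_j}\ge L>x$, so $v_j\in S_x$, whence $i\le|S_x|\le\frac{\omega-1}{2}$. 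Therefore the ``left-light'' uncovered members occupy only the first $\floor{(\omega-1)/2}$ positions, and symmetrically the ``right-light'' ones only the last $\floor{(\omega-1)/2}$ positions in the order of increasing right endpoint. Since $2\floor{(\omega-1)/2}<\omega$, some member of $K$ is covered, and the same count shows the covered members are central in both endpoint orders.

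To build $A$ I would sweep the maximum cliques from left to right, maintaining an independent set of covered vertices that hits every clique seen so far. When the sweep exposes a clique $K$ not yet hit, its common point lies to the right of every interval already reserved, so I must add a covered member of $K$ beginning after the last reserved interval ends. The main obstacle — and the reason this lemma is not routine — is guaranteeing that such a covered, non-overlapping member exists at each step. The naive interval-scheduling instinct of always reserving the member reaching furthest to the right is in fact \emph{incorrect}: a far-reaching reserved interval can overlap, and thereby disqualify, all covered members of a later clique, leaving only an $\omega$-uncovered member available (this really happens already for $\omega=3$). The fix is to reserve instead the most ``central'' covered member, say the covered member of smallest right endpoint, and to carry an invariant bounding how far the reserved interval protrudes into the next clique. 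To prove a suitable covered member is always available, I would combine the centrality supplied by the structural fact with the global bound $|S_x|\le\omega$, ruling out the configuration in which every available member is $\omega$-uncovered. Converting this into a clean, termination-safe invariant that simultaneously controls disjointness and availability is the delicate heart of the argument, and is where I expect the real work to lie.
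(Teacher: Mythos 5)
Your reduction is sound: finding an independent set $A$ of covered vertices that meets every $\omega$-clique is indeed equivalent to the lemma (colour $A$ with $0$ and use perfection on $G-A$), and your counting argument that every $\omega$-clique contains a covered member is correct. Note, however, that the paper never proves this lemma at all --- it imports it as Lemma~1 of Kostochka and Ne\v{s}et\v{r}il --- so your attempt must stand on its own, and it does not: the entire construction of $A$ rests on the claim that ``a suitable covered member is always available'' during your sweep, which you never prove and explicitly defer (``where I expect the real work to lie''). That availability claim is precisely the content of the lemma; everything you do prove (the reduction, the structural fact) is the routine part.

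Worse, the specific rule you propose --- reserve the covered member of smallest right endpoint --- is not merely unproven but false. Take $\omega=3$, so a vertex is uncovered exactly when its interval is at some point the only interval present, and consider the four intervals $I_{u_1}=[0,12]$, $I_{u_2}=[1,13]$, $I_{a}=[2,10]$, $I_{u_3}=[11,20]$. The clique number is $3$, the only triangles are $T_1=\{u_1,u_2,a\}$ (common points $[2,10]$) and $T_2=\{u_1,u_2,u_3\}$ (common points $[11,12]$), and the uncovered vertices are $u_1$ (alone on $[0,1)$) and $u_3$ (alone on $(13,20]$), while $a$ and $u_2$ are covered. Your sweep reaches $T_1$ first and reserves $a$, its covered member of smallest right endpoint; then $T_2$ is unhit, and its only member disjoint from $I_a$ is the uncovered $u_3$, so the sweep is stuck. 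Yet the lemma holds here: $\phi(u_2)=0$, $\phi(u_1)=1$, $\phi(a)=\phi(u_3)=2$ is a valid colouring, i.e.\ the correct reservation at $T_1$ was $u_2$, the covered member protruding \emph{furthest} into the future --- exactly the choice your centrality heuristic rejects, even though $a$ is the ``deepest'' interval (every point of $I_a$ lies in a triangle). A companion example (e.g.\ $[-5,2.1]$, $[-4,0.1]$, $[0,5]$, $[2,11]$, $[4.9,12]$, $[9,30]$) defeats the furthest-reaching rule in the same way, so no one-pass endpoint rule of either flavour can work: the correct choice at a clique genuinely depends on cliques not yet swept, and closing this gap requires a new idea (look-ahead, induction from the right, or a global argument as in Kostochka--Ne\v{s}et\v{r}il), not an invariant protecting the rule you propose.
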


\begin{theorem}
    For any graph $G$ in $d$-DIR, if $\omega(G)$ is odd, then $\chi(G) \le d(\omega(G)-1)+1$.
\end{theorem}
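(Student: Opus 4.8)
The plan is to apply the Kostochka--Ne\v{s}et\v{r}il lemma (\Cref{lemma:colouring_special_vertices_in_interval_graphs}) one slope class at a time while sharing a single reserved colour, call it $0$, across all $d$ slopes. First I would fix a representation of $G$ by segments with at most $d$ slopes and partition them into slope classes $\mathcal{S}^1,\dots,\mathcal{S}^d$. As noted in the introduction, each $G_i := G(\mathcal{S}^i)$ is an interval graph with $\omega(G_i)\le\omega := \omega(G)$; crucially I would use the \emph{geometric} interval representation, in which a point $x$ of the line corresponds to a point $p\in\mathbb{R}^2$ lying on a line of direction $i$, so that $S_x$ is exactly the set of slope-$i$ segments passing through $p$. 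Applying \Cref{lemma:colouring_special_vertices_in_interval_graphs} to each $G_i$ with this representation yields a colouring $\phi_i\colon V(G_i)\to\{0,\dots,\omega-1\}$ in which colour $0$ is used only on vertices that are \emph{not} $\omega$-uncovered, i.e.\ on segments every point of which is pierced by at least $\frac{\omega+1}{2}$ co-slope segments (here the oddness of $\omega$ is used, so that $\frac{\omega\pm1}{2}$ are integers).

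The heart of the argument is a single claim: two segments from \emph{different} slope classes that are both non-$\omega$-uncovered cannot be adjacent in $G$. To prove it I would suppose $u\in\mathcal{S}^i$ and $w\in\mathcal{S}^j$ with $i\ne j$ are both such segments and intersect; since their slopes differ they meet in exactly one point $p$. Reading off the covering condition at $p$ in each slope class, the set of slope-$i$ segments through $p$ has size at least $\frac{\omega+1}{2}$ and contains $u$, and likewise the slope-$j$ segments through $p$ number at least $\frac{\omega+1}{2}$ and contain $w$. These two sets are disjoint (different slopes) and every segment in their union passes through $p$, so they pairwise intersect and form a clique of size at least $\frac{\omega+1}{2}+\frac{\omega+1}{2}=\omega+1$, contradicting $\omega(G)=\omega$.

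With the claim in hand the colouring is immediate. The vertices receiving colour $0$ across all slope classes form an independent set of $G$: within a single $G_i$ they are independent because $\phi_i$ is proper, and across two slopes they are independent by the claim, since colour $0$ is assigned only to non-$\omega$-uncovered vertices. I would therefore keep colour $0$ shared and make the remaining colours private to each slope, recolouring each nonzero value $\phi_i(v)=c$ as the pair $(c,i)$. This is a proper colouring of $G$: a monochromatic adjacent pair within a slope is forbidden because $\phi_i$ is proper, and a cross-slope monochromatic pair could only occur on colour $0$, which is excluded. The palette has size $1+d(\omega-1)=d(\omega-1)+1$, as required.

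I expect the main obstacle to be the bookkeeping around the chosen representation: one must verify that ``$\omega$-uncovered'' with respect to the geometric representation really does translate into the clean statement that some point of the segment is pierced by at most $\frac{\omega-1}{2}$ co-slope segments, since \Cref{lemma:colouring_special_vertices_in_interval_graphs} is phrased for an abstract interval representation. Once that translation is pinned down, the clique-merging argument at the common point $p$ and the final palette accounting are routine, and the oddness of $\omega$ enters only through the integrality of $\frac{\omega\pm1}{2}$.
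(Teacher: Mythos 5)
Your proposal is correct and takes essentially the same approach as the paper's proof: partition the segments by slope, apply \Cref{lemma:colouring_special_vertices_in_interval_graphs} to each slope class with its geometric interval representation, reserve a shared colour $0$ for the non-$\omega$-uncovered vertices while keeping the remaining $\omega-1$ colours private to each slope, and kill any monochromatic colour-$0$ edge by merging the two cliques of size $\frac{\omega+1}{2}$ at the unique intersection point of the two segments, yielding a forbidden clique of size $\omega+1$. The only difference is presentational, in that you isolate the cross-slope non-adjacency of non-$\omega$-uncovered segments as a standalone claim, which the paper proves inline.
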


\begin{proof}
    Let $(I_u \mid u \in V(G))$ be a $d$-DIR representation of $G$.
    Let $V_1, \dots, V_d$ be the partition of $V(G)$ according to the slopes of the segments in its $d$-DIR representation.
    For every $i\in \{1,2,\dots,d\}$, $G[V_i]$ is an interval graph and $\omega(G)\geq \omega(G[V_i])$. So by \Cref{lemma:colouring_special_vertices_in_interval_graphs}, 
    there is a colouring $\phi_i$ of $G[V_i]$ using the colours $0,1,2,\dots,\omega(G)-1$ and no $\omega(G)$-uncovered vertices of $G[V_i]$ are coloured $0$. 
    For every $i\in\{1,\dots,d\}$ and for every $v\in V_i$, let $\phi(v)=(i,\phi_i(v))$ if $\phi_i(v) \neq 0$, and $\phi(v)=0$ otherwise. 
    We claim that $\phi$ is a $(d(\omega(G)-1)+1)$-colouring of $G$.
    Indeed, if $uv$ is an edge in $G$ such that $\phi(u)=\phi(v)$, then $\phi(u)=\phi(v)=0$. 
    In other words, $u$ and $v$ are both not $\omega(G)$-uncovered vertices and are in distinct $V_i$s.
    Let $i_u,i_v \in \{1,\dots,d\}$ be such that $u \in V_{i_u}$ and $v \in V_{i_v}$.
    Since $\omega(G)$ is odd, the intersection point $x$ of $I_u$ and $I_v$ is included in at least $\frac{\omega(G)+1}{2}$ intervals corresponding to vertices in $V_{i_u}$ and at least  $\frac{\omega(G)+1}{2}$ intervals corresponding to vertices in $V_{i_v}$.
    This gives a clique in $G$ of order $\omega(G)+1$, a contradiction.
\end{proof}

\section{Concluding remarks}\label{sec:conclusion}


Recall the following definitions.
The {\em Hall ratio} $\rho(G)$ of a graph $G$ is defined by $\rho(G)=\max_{H\subseteq G} \frac{|V(H)|}{\alpha(H)}$, where $\alpha$ denotes the independence number.
The {\em fractional chromatic number} $\chi_f(G)$ of $G$ is the infimum of $a/t$ as $t\to \infty$ with $a$ being the least integer such that $G$ admits a $t$-fold $a$-colouring.
Note that $\omega(G) \le \rho(G)\le \chi_f(G) \le \chi(G)$ for all $G$. 
Walczak~\cite{Wal15} showed that there are triangle-free segment graphs of arbitrarily large $\rho$.
Can we find constructions to certify the same lower bounds as in Theorem~\ref{thm:main} but for $\chi_f$ or $\rho$ instead of $\chi$? In particular, for every $d$ and even $\omega$ is there a $d$-directional segment graph of clique number $\omega$ with $\chi_f \ge d\omega$ or with $\rho \ge d\omega$?

A possible generalisation of our problem is to consider rectangles instead of segments. In a similar manner, we can introduce the notion of the $d$-directional rectangle graphs: consider a set $d$ non-parallel lines and any set of rectangles such that each of them has one of the sides parallel to one of the given lines. We want to find the $\chi$-binding function of such graphs. 

This problem has been quite extensively studied in the case of axis-parallel rectangles (i.e. $d = 1$) and thanks to Chalermsook and Walczak~\cite{ChWa21} it is known that in this case $O(\omega \log \omega)$ colours always suffice; however, the lower bound remains $\Omega(\omega)$. This implies that for any $d$, no more than $O(d \omega \log \omega)$ colours are needed, and our construction from this paper can be easily adapted (by replacing the segments with ``thin enough'' rectangles) to show a lower bound of $\omega \cdot (d + o(1))$. The gap between linear and log-linear function remains open, though, in both $d=1$ and the general case.

\paragraph{Acknowledgement.} This work was initiated during the {\em Sparse (Graphs) Coalition} session, {\em $\chi$-boundedness} in March 2023, organised by James Davies, Bartosz Walczak, and the second author. Our team thanks the organisers and participants for the conducive working atmosphere. We in particular thank Zden\v{e}k Dvo\v{r}\'ak for proposing the problem.
We are grateful to the reviewers for their careful reading and  valuable comments, and especially for pointing us to the  work of Kostochka and Ne\v{s}et\v{r}il~\cite{Kostochka2002} as well as the suggestion of a simpler proof of \Cref{thm:lower_bound_odd_omega}.

\paragraph{Open access statement.} For the purpose of open access,
a CC BY public copyright license is applied
to any Author Accepted Manuscript (AAM)
arising from this submission.

\bibliographystyle{alpha}
\bibliography{biblio}

\end{document}